\newtheorem{proposition}{Proposition}
\theoremstyle{definition}
\newtheorem{problem}{Problem}
\newtheorem{remark}{Remark}
\newcommand{\mbf}[1]{\mathbf{#1}}
\DeclareMathOperator*{\T}{\mathsf{T}}
\DeclareMathOperator*{\Z}{\mathcal{Z}}
\DeclareMathOperator*{\Q}{\mathcal{Q}}
\DeclareMathOperator*{\R}{\mathcal{R}}
\DeclareMathOperator*{\mL}{\mathcal{L}}
\DeclareMathOperator*{\blkdiag}{\mathrm{blkdiag}}
\begin{document}
\title{Mode-Prefix-Based Control of Switched Linear Systems with Applications to Fault Tolerance}
\author{Ram Padmanabhan, \IEEEmembership{Graduate Student Member, IEEE}, Antoine Aspeel,\\Necmiye Ozay, \IEEEmembership{Senior Member, IEEE}, and Melkior Ornik, \IEEEmembership{Senior Member, IEEE}
\thanks{This work was supported by Air Force Office of Scientific Research grant FA9550-23-1-0131, NASA University Leadership Initiative grant 80NSSC22M0070, and ONR CLEVR-AI MURI (\#N00014-21-1-2431).}
\thanks{RP and MO are with the University of Illinois Urbana-Champaign, Urbana, IL 61801, USA. AA and NO are with the University of Michigan, Ann Arbor, MI 48109, USA. (Corresponding Author: Ram Padmanabhan. Email: ramp3@illinois.edu.)}
}

\maketitle

\begin{abstract}
In this paper, we consider the problem of designing prefix-based optimal controllers for switched linear systems over finite horizons. This problem arises in fault-tolerant control, when system faults result in abrupt changes in dynamics. We consider a class of mode-prefix-based linear controllers that depend only on the history of the switching signal. The proposed optimal control problems seek to minimize both expected performance and worst-case performance over switching signals. We show that this problem can be reduced to a convex optimization problem. To this end, we synthesize one controller for each switching signal under a prefix constraint that ensures consistency between controllers. Then, system level synthesis is used to obtain a convex program in terms of the system-level parameters. In particular, it is shown that the prefix constraints are linear in terms of the system-level parameters.
Finally, we apply this framework for optimal control of a fighter jet model suffering from system faults, illustrating how fault tolerance is ensured.
\end{abstract}

\begin{IEEEkeywords}
Fault-tolerant control, switched systems, optimal control, system level synthesis.
\end{IEEEkeywords}

\section{Introduction} \label{sec:Introduction}
\IEEEPARstart{C}{ontrol} systems are required to be fault-tolerant and adaptive to changes in dynamics, often caused by the failure of components such as sensors or actuators. Failures that cause abrupt changes in dynamics can lead to undesirable consequences, preventing a control system from achieving objectives such as reachability or optimality \cite{AH19}. Classical control strategies such as robust control and adaptive control may not succeed in mitigating these faults \cite{WZ01, AD08}. Such issues have also contributed to the development of techniques to detect abrupt changes in dynamical systems \cite{ASW85,harirchi2018guaranteed}. Numerous techniques for fault-tolerant control of systems are described in \cite{AH19}, and a number of optimal control approaches to ensure fault tolerance have also been developed, e.g., \cite{FTC1, FTC2, FTC3}.

An abrupt change in dynamics can be modeled through the framework of \emph{switched} systems \cite{DL03}, which are governed by a switching signal determining the dynamics to be followed at each time instant. When a fault occurs, a system switches from nominal behavior to malfunctioning behavior. In such a framework, fault tolerance can be ensured by designing controllers for switched systems by achieving a specified objective such as optimality \cite{Sw1, Sw2, Sw3}.

\begin{figure}
    \centering
    \includegraphics[width=0.49\textwidth]{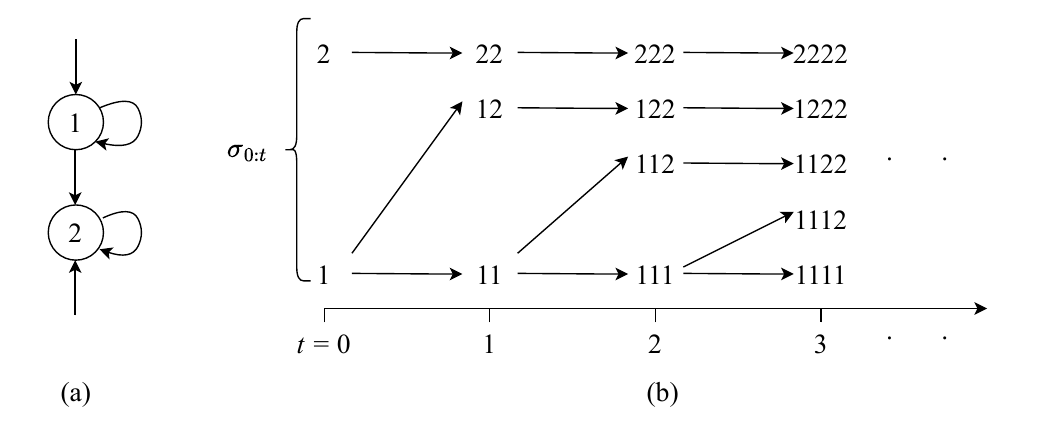}
    \caption{{(a) A fault model representing a transition from nominal to faulty operation at some \emph{a priori} unknown time instant. (b) Rolled out version of the fault model in (a), showing possible switching signal prefixes over time.}}
    \label{fig:FSM}
    \vspace{-0.7cm}
\end{figure}

Motivated by the applications to fault tolerance, in this paper we consider the problem of designing an optimal controller for discrete-time switched linear systems over a finite horizon. We consider linear controllers that are \emph{mode-prefix-based}, i.e., they depend only on the history or prefix of the switching signal available at each time instant. {Throughout this paper, we use the term `prefix' to indicate the history of the switching signal available at a given time instant. Fig.~\ref{fig:FSM} illustrates an example of how this prefix evolves over time in the context of a system switching from nominal operation (denoted $1$) to faulty operation (denoted $2$).} We formulate optimal control problems depending on whether the switching signal is chosen stochastically or {non-}deterministically from a prescribed \emph{language}, and depending on the characteristics of process and measurement noises. As such problems are commonly non-convex in the controller gains, we invoke the framework of system level synthesis \cite{SLS19}, using a nonlinear change of variables to reformulate the optimal control problems as convex programs. 
{In the context of fault-tolerant control since it is not known \emph{a priori} which fault will occur when, the controller gains corresponding to two different switching signals need to be identical if the histories of the switching signals they observe are identical to ensure causality. This can be encoded with a linear prefix constraint.} 
A key contribution of our work is in proving that the prefix constraint remains linear even after the transformation through system level synthesis. 

A closely related notion is the property of \emph{quadratic invariance} of constraints, which guarantees convexity of a constraint in the Youla parameterization \cite{SB10} if the constraint is originally convex \cite{LL11}. Such a property has been proved for prefix constraints in \cite{RYO20} for state estimation with missing measurements. However, \cite{RYO20} considers systems that can switch between exactly two modes, while we allow for arbitrary switching between multiple modes. Further, we parameterize the controller using system level synthesis, and not the Youla parameterization used in \cite{RYO20}. Using system level synthesis, the controller can be implemented without inverting the nonlinear change of variables \cite{SLS19}, unlike the Youla parameterization, thus avoiding numerical and computational issues. Prefix-based constraints have been used in \cite{YO18} for fault tolerance with temporal logic specifications in the context of open-loop planning instead of closed-loop control. In comparison to existing techniques to ensure fault tolerance \cite{AH19, FTC1, FTC2, FTC3} and the works mentioned above, our framework can address more complex faulty behavior in control systems modeled by multiple failure modes without \emph{a priori} knowledge of failure times.


The remainder of this paper is organized as follows. In Section \ref{sec:Preliminaries}, we introduce basic notation and provide a brief overview of finite-horizon system level synthesis. In Section~\ref{sec:Formulation}, we present the finite-horizon optimal control problems we aim to solve for switched linear systems, and impose linear prefix constraints on the controller. Using system level synthesis, we reformulate these problems as convex programs in Section \ref{sec:Reformulation}, proving prefix constraints remain linear after the change of variables in system level synthesis. In Section \ref{sec:Examples}, we present applications of these convex programs to fault tolerance in a model of a fighter jet.

\section{Preliminaries} \label{sec:Preliminaries}

\subsection{Notation}
Let $X$ and $Y$ be sets. Then, $Y^X$ denotes the set of all functions from $X$ to $Y$. The quantity $I_n$ denotes the $n\times n$ identity matrix, and the subscript $n$ is dropped when dimensions are clear from context. The quantity $\bm{0}$ denotes the zero matrix of appropriate dimensions. Let $\mbf{M} = \begin{bmatrix} M_{00} & \bm{0} & \ldots & \bm{0} \\ M_{10} & M_{11} & \ldots & \bm{0} \\ \vdots & \vdots & \ddots & \vdots \\ M_{m0} & M_{m1} & \ldots & M_{mn} \end{bmatrix}$ be a block lower triangular matrix. Then, the matrix $\mbf{M}(:\!t) = \begin{bmatrix} M_{00} & \ldots & \bm{0} \\ \vdots & \ddots & \vdots \\ M_{t0} & \ldots & M_{tt}\end{bmatrix}$ denotes the first $(t+1)$ row and column \emph{blocks} of $\mbf{M}$.

\subsection{An Overview of System Level Synthesis}
We now provide a brief overview of finite-horizon system level synthesis \cite{SLS19}, an approach to design optimal controllers using system responses. Consider a linear, time-varying system
\begin{equation} \label{eq:LTV}
x_{t+1} = A_tx_t + B_tu_t + w_t, ~~~ y_t = C_tx_t + v_t
\end{equation}
over the horizon $t = 0, \ldots, T$. Here, $x_t \in \mathbb{R}^n$ is the state, $u_t \in \mathbb{R}^p$ is the control input, $w_t \in \mathbb{R}^n$ is the process noise, $y_t \in \mathbb{R}^{m}$ is the measurement and $v_t \in \mathbb{R}^m$ is the measurement noise. Consider linear time-varying, output-feedback controllers
\begin{equation} \label{eq:u_SLS}
u_t = \sum_{\tau \leq t} K_{(t, \tau)}y_{\tau},
\end{equation}
where the controller gains $K_{(t, \tau)}$ are designed according to a specified optimal control problem. Introduce the quantities $\mbf{x} \coloneqq \left[x_{0}^{\T}, \ldots, x_{T}^{\T} \right]^{\T}, ~\mbf{y} \coloneqq \left[y_{0}^{\T}, \ldots, y_{T}^{\T} \right]^{\T}, \mbf{w} \coloneqq \left[x_{0}^{\T}, w_{0}^{\T}, \ldots, w_{T-1}^{\T}\right]^{\T}, ~\mbf{v} \coloneqq \left[v_{0}^{\T}, \ldots, v_{T}^{\T}\right]^{\T}, \mbf{u} \coloneqq \left[u_{0}^{\T}, \ldots, u_{T}^{\T}\right]^{\T}, \mbf{A} \coloneqq \blkdiag\{A_0, \ldots, A_{T-1}, \bm{0}\}, \mbf{B} \coloneqq \blkdiag\{B_0, \ldots, B_{T-1}, \bm{0}\}, \mbf{C} \coloneqq \blkdiag\{C_0, \ldots, C_T\},$
\begin{equation} \label{eq:vec}
\mbf{K} \coloneqq 
\begin{bmatrix} 
K_{(0,0)} & 0 & 0 & \ldots & 0 \\
K_{(1,0)} & K_{(1,1)} & 0 & \ldots & 0 \\
\vdots & \vdots & \vdots & \ddots & \vdots \\ 
K_{(T,0)} & K_{(T,1)} & K_{(T,2)} & \ldots & K_{(T,T)} 
\end{bmatrix},
\end{equation}
so that the controller \eqref{eq:u_SLS} is written as $\mbf{u} = \mbf{Ky}$. Let $\Z$ be the block downshift operator, a matrix with {$T$ copies of $I_n$} on its first block subdiagonal and zeros elsewhere{, so that $\Z \in \mathbb{R}^{n(T+1)\times n(T+1)}$.} Using the above definitions, the behavior of the system \eqref{eq:LTV} can be written as:
\begin{align} \label{eq:LTV_vec}
\begin{split}
\mbf{x} &= \Z\left(\mbf{Ax} + \mbf{Bu}\right) + \mbf{w}, \\
\mbf{y} &= \mbf{Cx} + \mbf{v}.
\end{split}
\end{align}
Substituting $\mbf{u} = \mbf{Ky}$ and solving for $\mbf{x}, \mbf{y}$ and subsequently $\mbf{u}$, we obtain the following \emph{system response} map \cite{SLS19}:
\begin{equation} \label{eq:SR_map}
\begin{bmatrix} \mbf{x} \\ \mbf{u} \end{bmatrix} = \mbf{\Phi} \begin{bmatrix} \mbf{w} \\ \mbf{v} \end{bmatrix} \coloneqq \begin{bmatrix} \mbf{\Phi}_{xx} & \mbf{\Phi}_{xy} \\ \mbf{\Phi}_{ux} & \mbf{\Phi}_{uy} \end{bmatrix} \begin{bmatrix} \mbf{w} \\ \mbf{v} \end{bmatrix},
\end{equation}
where $\mbf{\Phi}_{xx} = (I - \Z(\mbf{A}+\mbf{BKC}))^{-1}$, $\mbf{\Phi}_{xy} = \mbf{\Phi}_{xx} \Z\mbf{BK}$, $\mbf{\Phi}_{ux} = \mbf{KC\Phi}_{xx}$ and $\mbf{\Phi}_{uy} = \mbf{K} + \mbf{KC\Phi}_{xx}\Z\mbf{BK}$. Since $\mbf{A}, \mbf{B}, \mbf{C}, \mbf{K}$ and $\Z$ are block diagonal or block lower triangular matrices, it is clear that the maps $\{\mbf{\Phi}_{xx}, \mbf{\Phi}_{xy}, \mbf{\Phi}_{ux}, \mbf{\Phi}_{uy}\}$ are also block lower triangular matrices.

Rather than optimizing over the controller parameterized by $\mbf{K}$ for optimal control, the motivation that underlies system level synthesis is to optimize directly over these system response maps. In this context, it was shown in \cite{HY22} that the affine subspace defined by
\begin{align} \label{eq:Affine}
	\begin{bmatrix} I - \Z\mbf{A} ~ -\Z\mbf{B} \end{bmatrix}
	\mbf{\Phi} = 
    \begin{bmatrix} I ~~ \bm{0} \end{bmatrix}, \mbf{\Phi} \begin{bmatrix} I-\Z\mbf{A} \\ -\mbf{C} \end{bmatrix} = 
    \begin{bmatrix} I \\ \bm{0} \end{bmatrix}
\end{align}
parameterizes all possible system responses \eqref{eq:SR_map}. It was also shown that for any block lower triangular matrices $\{\mbf{\Phi}_{xx}, \mbf{\Phi}_{xy}, \mbf{\Phi}_{ux}, \mbf{\Phi}_{uy}\}$ satisfying \eqref{eq:Affine}, the controller $\mbf{K} = \mbf{\Phi}_{uy} - \mbf{\Phi}_{ux}\mbf{\Phi}_{xx}^{-1}\mbf{\Phi}_{xy}$ achieves the desired response. 

\section{Problem Formulation} \label{sec:Formulation}
We consider a set $\left\{\Sigma_i\right\}_{i=1}^{M}$ of $M$ systems with the following linear, time-varying dynamics on the horizon $t = 0, \ldots, T$:
\begin{equation} \label{eq:System_i}
	\Sigma_i : x_{t+1} = A^{i}_{t}x_t + B^{i}_{t}u_t + w^{i}_{t}, ~~~ y_t = C^{i}_{t}x_t + v^{i}_{t}
\end{equation}
for $i = 1, \ldots, M$, where the state $x_t \in \mathbb{R}^n$, input $u_t \in \mathbb{R}^p$ and output $y_t \in \mathbb{R}^m$. For each system $\Sigma_i$, $w^{i}_{t} \in \mathbb{R}^n$ is the process noise and $v^{i}_{t} \in \mathbb{R}^m$ is the measurement noise. Both $w^{i}_{t}$ and $v^{i}_{t}$ are either stochastic, or {non-}deterministic but bounded. 

The mechanism of switching between these systems is governed by a \emph{switching signal} $\sigma: \{0, \ldots, T\} \to \{1, \ldots, M\}$, which determines the dynamics that are followed at each time instant $t$. The set of possible switching signals is the \emph{language} $\mL \subseteq \{1, \ldots, M\}^{\{0, \ldots, T\}}$, and $\sigma$ is chosen either stochastically or {non-}deterministically from $\mL$. For a given $\sigma \in \mL$, the dynamics \eqref{eq:System_i} for $i = 1, \ldots, M$ can be compactly written as
\begin{equation} \label{eq:System_sigma}
x_{t+1} = A_{t}^{\sigma_t}x_t + B_{t}^{\sigma_t}u_t + w_{t}^{\sigma_t}, ~~~ y_t = C_{t}^{\sigma_t}x_t + v_{t}^{\sigma_t}
\end{equation}
where $\sigma_t = i \in \{1, \ldots, M\}$ determines the system $i$ whose dynamics are followed at each time instant $t$. {For example, the language $\mL$ corresponding to the fault model in Fig.~\ref{fig:FSM} is:
\begin{align}
    \mL = \{&\sigma\in\{1,2\}^{\{0,\dots,T\}} : \exists~ t_{\mathrm{fault}} \in \{0, \ldots, T\} \text{ s.t. } \nonumber \\
    &\sigma_t=1 \text{ if } t < t_{\mathrm{fault}},
    \text{ and } \sigma_t=2 \text{ otherwise}\}. \label{eq:eg_L}
\end{align}
Moreover, when the transitions in the fault model are equipped with probabilities, i.e., the fault model is a Markov chain, each switching signal $\sigma$ in the language $\mL$ has an associated probability $\pi(\sigma)$.}


We consider \emph{mode-prefix-based} linear output-feedback controllers with memory, of the form
\begin{equation} \label{eq:u_K_OF}
u_t = \sum_{\tau \leq t} K_{(t, \tau)}^{\sigma_{0:t}} y_{\tau},
\end{equation}
where we note that only the prefix of the switching signal $\sigma_{0:t}$ is available to the controller at time $t$. Recall the notations $\mbf{x}, \mbf{u}, \mbf{y}$ from Section \ref{sec:Preliminaries}, and further let 
\begin{align*}
\mbf{w}^{\sigma} &\coloneqq \left[\left({x_{0}^{\sigma_0}}\right)^{\T}, \left({w_{0}^{\sigma_0}}\right)^{\T}, \left({w_{1}^{\sigma_1}}\right)^{\T}, \ldots, \left({w_{T-1}^{\sigma_{T-1}}}\right)^{\T}\right]^{\T}, \\
\text{and }~ \mbf{v}^{\sigma} &\coloneqq \left[\left({v_{0}^{\sigma_0}}\right)^{\T}, \ldots, \left({v_{T}^{\sigma_T}}\right)^{\T}\right]^{\T},
\end{align*}
denoting process noise and measurement noise vectors over time induced by a particular switching signal $\sigma$. Similarly, define $\mbf{A^{\sigma}} \coloneqq \blkdiag\{A_{0}^{\sigma_0}, \ldots, A_{T-1}^{\sigma_{T-1}}, \bm{0}\}$, $\mbf{B^{\sigma}} \coloneqq \blkdiag\{B_{0}^{\sigma_0}, \ldots, B_{T-1}^{\sigma_{T-1}}, \bm{0}\}$, and $\mbf{C^{\sigma}} \coloneqq \blkdiag\{C_{0}^{\sigma_0}, \ldots, C_{T}^{\sigma_{T}}\}$
as quantities denoting the system matrices over time, induced by a particular switching signal $\sigma$. Then, the behavior of the dynamics \eqref{eq:System_sigma} can be rewritten as
\begin{align} \label{eq:System_vec}
\begin{split}
\mbf{x} &= \Z\left(\mbf{A^{\sigma}}\mbf{x} + \mbf{B^{\sigma}}\mbf{u}\right) + \mbf{w}^{\sigma}, \\
\mbf{y} &= \mbf{C^{\sigma}}\mbf{x} + \mbf{v}^{\sigma}.
\end{split}
\end{align}
In this setting, we are interested in two problems.

\begin{problem}[$\mathcal{H}_2$-optimal control, stochastic switching] \label{prob:H2}
Assume that the process noise and measurement noise are stochastic and normally distributed, i.e., $\mbf{w}^{\sigma} \sim \mathcal{N}(\bm{0}, \mbf{P_{w}^{\sigma}})$ and $\mbf{v}^{\sigma} \sim \mathcal{N}(\bm{0}, \mbf{P_{v}^{\sigma}})$ for each $\sigma \in \mL$, where $\mbf{P_{w}^{\sigma}} = \blkdiag\{\mbf{P}_{x_0}^{\sigma_0}, \mbf{P}_{w_0}^{\sigma_0}, \ldots, \mbf{P}_{w_{T-1}}^{\sigma_{T-1}}\}$ and $\mbf{P_{v}^{\sigma}} = \blkdiag\{\mbf{P}_{v_0}^{\sigma_0}, \ldots, \mbf{P}_{v_T}^{\sigma_T}\}$ based on the definitions for $\mbf{w}^{\sigma}$ and $\mbf{v}^{\sigma}$. Further assume that the switching signal $\sigma$ is chosen stochastically from $\mL$ with probability $\pi(\sigma)$, such that $\sum_{\sigma \in \mL} \pi(\sigma) = 1$. Then, we are interested in the following optimal control problem:
\begin{align} \label{eq:H2}
\begin{split}
&\min_{\left(K^{\sigma_{0:t}}_{(t,\tau)}\right)_{t=0,\dots,T}^{\tau=0,\dots,t}} ~\mathbb{E}_{\sigma \in \mL} ~\mathbb{E} \left[\sum_{t = 0}^{T} x_{t}^{\T}Q_tx_t + u_{t}^{\T}R_tu_t\right] \\
&\text{subject to the dynamics \eqref{eq:System_sigma} and the controller \eqref{eq:u_K_OF},}
\end{split}
\end{align}
where the inner expectation is over $\mbf{w}^{\sigma}$ and $\mbf{v}^{\sigma}$.
\end{problem}


\begin{problem}[$\mathcal{L}_{1}$-optimal control, worst-case switching] \label{prob:L1}
Assume that the process noise and measurement noise are {non-}deterministic and bounded, i.e., $\left\|\mbf{w}^{\sigma}\right\|_{\infty} \leq \overline{w}$ and $\left\|\mbf{v}^{\sigma}\right\|_{\infty} \leq \overline{v}$ for each $\sigma \in \mL$, also chosen {non-}deterministically. We are interested in the following problem which seeks to minimize the maximum deviation of the state trajectory from the origin:
\begin{align} \label{eq:L1}
\begin{split}
&\min_{\left(K^{\sigma_{0:t}}_{(t,\tau)}\right)_{t=0,\dots,T}^{\tau=0,\dots,t}} \max_{\sigma \in \mL} \max_{\substack{\left\|\mbf{w}^{\sigma}\right\|_{\infty} \leq \overline{w} \\ \left\|\mbf{v}^{\sigma}\right\|_{\infty} \leq \overline{v}}} \|\mbf{x}\|_{\infty} \\
&\text{subject to the dynamics \eqref{eq:System_sigma} and the controller \eqref{eq:u_K_OF}.}
\end{split}
\end{align}
\end{problem}

We note that Problems \ref{prob:H2} and \ref{prob:L1} are two specific optimal control problems that we consider. The framework we develop can generalize to other optimal control problems with convex costs and constraints, as discussed in Section \ref{sec:Reformulation}.

To solve Problems \ref{prob:H2} and \ref{prob:L1}, we consider a controller for each switching sequence $\sigma$, i.e., $K_{(t,\tau)}^\sigma$, and enforce that same prefixes must result in the same controller, i.e.,
\begin{equation} \label{eq:prefix_Ka}
\sigma_{0:t} = \sigma'_{0:t} \implies K^{\sigma}_{(t, \tau)} = K^{\sigma'}_{(t, \tau)} \text{ for any $\sigma,\sigma'\in\mathcal{L}$, $\tau \leq t$.}
\end{equation}
This allows to synthesize one controller for each switching sequence while ensuring consistency.


We remark that such a constraint is similar to those used in \cite{RYO20}. Given a particular switching signal $\sigma \in \mL$, define
\begin{equation} \label{eq:K_sigma}
\mbf{K^{\sigma}} \coloneqq \begin{bmatrix} 
K^{\sigma}_{(0,0)} & 0 & 0 & \ldots & 0 \\
K^{\sigma}_{(1,0)} & K^{\sigma}_{(1,1)} & 0 & \ldots & 0 \\
\vdots & \vdots & \vdots & \ddots & \vdots \\ 
K^{\sigma}_{(T,0)} & K^{\sigma}_{(T,1)} & K^{\sigma}_{(T,2)} & \ldots & K^{\sigma}_{(T,T)} 
\end{bmatrix},
\end{equation}
similar to \eqref{eq:vec}. Then, the prefix constraint \eqref{eq:prefix_Ka} is written as
\begin{equation} \label{eq:prefix_K}
\sigma_{0:t} = \sigma'_{0:t} \implies \mbf{K}^{\sigma}(:\!t) = \mbf{K}^{\sigma'}(:\!t) \text{ for any $t$},
\end{equation}
and the controller \eqref{eq:u_K_OF} can be written as $\mbf{u} = \mbf{K^{\sigma}}\mbf{y}$. While constraint \eqref{eq:prefix_K} is linear in $\mathbf{K}^\sigma$, the objectives in \eqref{eq:H2} and \eqref{eq:L1} are not convex in $\mbf{K^{\sigma}}$. In the next section, we use system level synthesis to reformulate these as convex problems using prefix constraints similar to \eqref{eq:prefix_K} on the system response maps.

\section{Convex Reformulation} \label{sec:Reformulation}
In this section, we demonstrate how Problems \ref{prob:H2} and \ref{prob:L1} can be rewritten as convex problems using the system response maps. Substituting the controller $\mbf{u} = \mbf{K^{\sigma}}\mbf{y}$ in \eqref{eq:System_vec}, we obtain the mode-prefix-based system response map:
\begin{equation} \label{eq:SR_map_sigma}
\begin{bmatrix} \mbf{x} \\ \mbf{u} \end{bmatrix} = \mbf{\Phi^{\sigma}} \begin{bmatrix} \mbf{w}^{\sigma} \\ \mbf{v}^{\sigma} \end{bmatrix} \coloneqq \begin{bmatrix} \mbf{\Phi}^{\sigma}_{xx} & \mbf{\Phi}^{\sigma}_{xy} \\ \mbf{\Phi}^{\sigma}_{ux} & \mbf{\Phi}^{\sigma}_{uy} \end{bmatrix} \begin{bmatrix} \mbf{w}^{\sigma} \\ \mbf{v}^{\sigma} \end{bmatrix},
\end{equation}
where $\{\mbf{\Phi}^{\sigma}_{xx}, \mbf{\Phi}^{\sigma}_{xy}, \mbf{\Phi}^{\sigma}_{ux}, \mbf{\Phi}^{\sigma}_{uy}\}$ are block lower triangular maps defined using $\mbf{A^{\sigma}}$, $\mbf{B^{\sigma}}$, $\mbf{C^{\sigma}}$ and $\mbf{K^{\sigma}}$, similar to \eqref{eq:SR_map}. We now show that the prefix constraint \eqref{eq:prefix_K} translates to similar prefix constraints on $\{\mbf{\Phi}^{\sigma}_{xx}, \mbf{\Phi}^{\sigma}_{xy}, \mbf{\Phi}^{\sigma}_{ux}, \mbf{\Phi}^{\sigma}_{uy}\}$.

\begin{proposition} \label{prop:QI}
For two switching signals $\sigma, \sigma' \in \mL$ such that $\sigma_{0:t} = \sigma'_{0:t}$, the constraint $\mbf{K}^{\sigma}(:\!t) = \mbf{K}^{\sigma'}(:\!t)$ for all $t$ holds if and only if $\phi^{\sigma}(:\!t) = \phi^{\sigma'}(:\!t)$ for all $t$, and for all $\phi^{\sigma}\in\{\mbf{\Phi}^{\sigma}_{xx}, \mbf{\Phi}^{\sigma}_{xy}, \mbf{\Phi}^{\sigma}_{ux}, \mbf{\Phi}^{\sigma}_{uy}\}$.
\end{proposition}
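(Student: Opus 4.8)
The plan is to reduce the whole statement to one structural fact about block lower triangular matrices: \emph{truncation to the leading $(t+1)$ blocks commutes with products and inverses}. Concretely, if $\mbf{M}$ and $\mbf{N}$ are block lower triangular then $(\mbf{MN})(:\!t) = \mbf{M}(:\!t)\,\mbf{N}(:\!t)$, and if in addition $\mbf{M}$ has invertible diagonal blocks, then $\mbf{M}^{-1}$ is block lower triangular with $(\mbf{M}^{-1})(:\!t) = (\mbf{M}(:\!t))^{-1}$. The product identity holds because the $(i,j)$ block of $\mbf{MN}$ is $\sum_{j\le k\le i} M_{ik}N_{kj}$, so for $i,j\le t$ only blocks with indices at most $t$ appear; the inverse identity then follows by truncating $\mbf{M}\mbf{M}^{-1}=I$ with the product rule, which gives $\mbf{M}(:\!t)\,(\mbf{M}^{-1})(:\!t)=I$. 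Since $\Z$, $\mbf{A}^\sigma$, $\mbf{B}^\sigma$, $\mbf{C}^\sigma$, $\mbf{K}^\sigma$ and all four response maps are block lower triangular, this lemma is the engine for both directions, and I would argue everything at a fixed $t$ with $\sigma_{0:t}=\sigma'_{0:t}$, then quantify over all such $t$.

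First I would record that $\sigma_{0:t}=\sigma'_{0:t}$, i.e.\ $\sigma_s=\sigma'_s$ for $s\le t$, forces the truncated system data to agree: the leading diagonal blocks of the block-diagonal matrices coincide, so $\mbf{A}^\sigma(:\!t)=\mbf{A}^{\sigma'}(:\!t)$, and likewise for $\mbf{B}^\sigma$ and $\mbf{C}^\sigma$, while $\Z$ does not depend on $\sigma$. For the forward direction, assume $\mbf{K}^\sigma(:\!t)=\mbf{K}^{\sigma'}(:\!t)$. Writing $\mbf{\Phi}^\sigma_{xx}=(I-\Z(\mbf{A}^\sigma+\mbf{B}^\sigma\mbf{K}^\sigma\mbf{C}^\sigma))^{-1}$ and applying the lemma repeatedly gives $\mbf{\Phi}^\sigma_{xx}(:\!t)=\left(I-\Z(:\!t)\big(\mbf{A}^\sigma(:\!t)+\mbf{B}^\sigma(:\!t)\mbf{K}^\sigma(:\!t)\mbf{C}^\sigma(:\!t)\big)\right)^{-1}$, an expression built entirely from truncated data that agrees across $\sigma$ and $\sigma'$; hence $\mbf{\Phi}^\sigma_{xx}(:\!t)=\mbf{\Phi}^{\sigma'}_{xx}(:\!t)$. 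The same truncation-of-a-product argument applied to $\mbf{\Phi}^\sigma_{xy}=\mbf{\Phi}^\sigma_{xx}\Z\mbf{B}^\sigma\mbf{K}^\sigma$, $\mbf{\Phi}^\sigma_{ux}=\mbf{K}^\sigma\mbf{C}^\sigma\mbf{\Phi}^\sigma_{xx}$ and $\mbf{\Phi}^\sigma_{uy}=\mbf{K}^\sigma+\mbf{K}^\sigma\mbf{C}^\sigma\mbf{\Phi}^\sigma_{xx}\Z\mbf{B}^\sigma\mbf{K}^\sigma$ then yields the remaining three equalities $\phi^\sigma(:\!t)=\phi^{\sigma'}(:\!t)$.

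For the converse I would use the reconstruction formula $\mbf{K}^\sigma=\mbf{\Phi}^\sigma_{uy}-\mbf{\Phi}^\sigma_{ux}(\mbf{\Phi}^\sigma_{xx})^{-1}\mbf{\Phi}^\sigma_{xy}$ from the system level synthesis overview. Here $\mbf{\Phi}^\sigma_{xx}$ has identity diagonal blocks, being the inverse of $I-\Z(\cdots)$ whose diagonal blocks are the identity because $\Z$ has no diagonal part, so its truncation is invertible and the lemma applies. Truncating the reconstruction formula gives $\mbf{K}^\sigma(:\!t)=\mbf{\Phi}^\sigma_{uy}(:\!t)-\mbf{\Phi}^\sigma_{ux}(:\!t)\,(\mbf{\Phi}^\sigma_{xx}(:\!t))^{-1}\,\mbf{\Phi}^\sigma_{xy}(:\!t)$, which depends only on the truncated response maps; assuming these agree across $\sigma$ and $\sigma'$ immediately yields $\mbf{K}^\sigma(:\!t)=\mbf{K}^{\sigma'}(:\!t)$, closing the equivalence.

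I expect the only genuine work to be the truncation lemma, in particular the inverse case and the bookkeeping that justifies replacing $\Z$ by $\Z(:\!t)$ inside a truncated product; everything else is a mechanical substitution into the four response-map formulas and the reconstruction formula. A secondary point I would verify carefully is the indexing of the block-diagonal data: the matrices $\mbf{A}^\sigma,\mbf{B}^\sigma$ carry a trailing zero block, so I would confirm that $\sigma_s=\sigma'_s$ for $s\le t$ pins down exactly the diagonal blocks appearing in the $(:\!t)$ truncation, so that the truncated system matrices really do coincide.
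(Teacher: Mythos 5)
Your proposal is correct and follows essentially the same route as the paper's proof: the forward direction via the formulas $\mbf{\Phi}^\sigma_{xx}=(I-\Z(\mbf{A}^\sigma+\mbf{B}^\sigma\mbf{K}^\sigma\mbf{C}^\sigma))^{-1}$ and the product expressions for the other three maps, and the converse via the reconstruction formula $\mbf{K}^\sigma=\mbf{\Phi}^\sigma_{uy}-\mbf{\Phi}^\sigma_{ux}(\mbf{\Phi}^\sigma_{xx})^{-1}\mbf{\Phi}^\sigma_{xy}$, all driven by the fact that block-truncation commutes with products and inverses of block lower triangular matrices. The only difference is presentational: you isolate and prove that truncation lemma explicitly (including the invertibility of the diagonal blocks of $\mbf{\Phi}^\sigma_{xx}$), whereas the paper invokes it implicitly by asserting that block-triangularity is preserved under inversion, so your write-up is if anything slightly more complete.
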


\begin{proof}
We first show the forward direction of the proposition. Suppose that for $\sigma, \sigma' \in \mL$ such that $\sigma_{0:t} = \sigma'_{0:t}$, the prefix constraint $\mbf{K}^{\sigma}(:\!t) = \mbf{K}^{\sigma'}(:\!t)$ holds for all $t$. Note that when $\sigma_{0:t} = \sigma'_{0:t}$, the dynamics are identical up to time $t$, i.e.,
\begin{equation} \label{eq:prefix_ABC}
\mbf{A^{\sigma}}(:\!t) = \mbf{A}^{\sigma'}(:\!t),  \mbf{B^{\sigma}}(:\!t) = \mbf{B}^{\sigma'}(:\!t),  \mbf{C^{\sigma}}(:\!t) = \mbf{C}^{\sigma'}(:\!t).
\end{equation}
Let $\tilde{\mbf{A}}^{\sigma} = \mbf{A^{\sigma}} + \mbf{B^{\sigma}}\mbf{K^{\sigma}}\mbf{C^{\sigma}}$. Then, constraints \eqref{eq:prefix_K} and \eqref{eq:prefix_ABC} imply $\tilde{\mbf{A}}^{\sigma}(:\!t) = \tilde{\mbf{A}}^{\sigma'}(:\!t)$. Next, $\mbf{\Phi}_{xx}^{\sigma} = (I - \Z\tilde{\mbf{A}}^{\sigma})^{-1}$. Since block-triangularity is preserved through matrix inversion,
\begin{align*}
\mbf{\Phi}_{xx}^{\sigma}(:\!t) &= (I(:\!t) - \Z(:\!t)\tilde{\mbf{A}}^{\sigma}(:\!t))^{-1} \\
&= (I(:\!t) - \Z(:\!t)\tilde{\mbf{A}}^{\sigma'}(:\!t))^{-1} = \mbf{\Phi}_{xx}^{\sigma'}(:\!t),
\end{align*}
representing the prefix constraint on $\mbf{\Phi}_{xx}^{\sigma}$. Similarly, using $\mbf{\Phi}_{xy}^{\sigma} = \mbf{\Phi}_{xx}^{\sigma}\Z\mbf{B^{\sigma}K^{\sigma}}$,
\begin{align*}
\mbf{\Phi}_{xy}^{\sigma}(:\!t) &= \mbf{\Phi}_{xx}^{\sigma}(:\!t)\Z(:\!t)\mbf{B^{\sigma}}(:\!t)\mbf{K^{\sigma}}(:\!t) \\
&= \mbf{\Phi}_{xx}^{\sigma'}(:\!t)\Z(:\!t)\mbf{B^{\sigma'}}(:\!t)\mbf{K^{\sigma'}}(:\!t) = \mbf{\Phi}_{xy}^{\sigma'}(:\!t),
\end{align*}
representing the prefix constraint on $\mbf{\Phi}_{xy}^{\sigma}$. Using identical arguments, prefix constraints must also hold for $\mbf{\Phi}_{ux}^{\sigma}$ and $\mbf{\Phi}_{uy}^{\sigma}$ when $\mbf{K^{\sigma}}(:\!t) = \mbf{K}^{\sigma'}(:\!t)$, thus proving the forward direction.

Conversely, suppose that the prefix constraint $\phi^{\sigma}(:\!t) = \phi^{\sigma'}(:\!t)$ holds for each $\phi^{\sigma} \in \{\mbf{\Phi}^{\sigma}_{xx}, \mbf{\Phi}^{\sigma}_{xy}, \mbf{\Phi}^{\sigma}_{ux}, \mbf{\Phi}^{\sigma}_{uy}\}$ when $\sigma_{0:t} = \sigma'_{0:t}$. Using \eqref{eq:Affine}, $\mbf{K}^{\sigma} = \mbf{\Phi}^{\sigma}_{uy} - \mbf{\Phi}_{ux}^{\sigma}{\mbf{\Phi}_{xx}^{\sigma}}^{-1}\mbf{\Phi}_{xy}^{\sigma}$. Since block-triangularity is preserved through inversion,
\begin{align*}
\mbf{K}^{\sigma}(:\!t) &= \mbf{\Phi}^{\sigma}_{uy}(:\!t) - \mbf{\Phi}_{ux}^{\sigma}(:\!t)\mbf{\Phi}_{xx}^{\sigma}(:\!t)^{-1}\mbf{\Phi}_{xy}^{\sigma}(:\!t) \\
&= \mbf{\Phi}^{\sigma'}_{uy}(:\!t) - \mbf{\Phi}_{ux}^{\sigma'}(:\!t)\mbf{\Phi}_{xx}^{\sigma'}(:\!t)^{-1}\mbf{\Phi}_{xy}^{\sigma'}(:\!t) = \mbf{K}^{\sigma'}(:\!t),
\end{align*}
proving the reverse direction of the proposition. Thus, the prefix constraint \eqref{eq:prefix_K} is equivalent to:
\begin{align} \label{eq:prefix_Phi}
\begin{split}
\sigma_{0:t} = \sigma'_{0:t} \implies &\phi^{\sigma}(:\!t) = \phi^{\sigma'}(:\!t) \\
\text{ for each } &\phi^{\sigma} \in \{\mbf{\Phi}^{\sigma}_{xx}, \mbf{\Phi}^{\sigma}_{xy}, \mbf{\Phi}^{\sigma}_{ux}, \mbf{\Phi}^{\sigma}_{uy}\},
\end{split}
\end{align}
concluding the proof.
\end{proof}

\begin{remark}
For a constraint that is convex in the original controller parameterization, the property of quadratic invariance guarantees that convexity is preserved in the Youla parameterization as well \cite{LL11}. In Proposition \ref{prop:QI}, we show that the linear constraint \eqref{eq:prefix_K} is not just convex, but in fact \emph{linear} in the system level parameters, by exploiting the structure of the matrices $\{\mbf{\Phi}^{\sigma}_{xx}, \mbf{\Phi}^{\sigma}_{xy}, \mbf{\Phi}^{\sigma}_{ux}, \mbf{\Phi}^{\sigma}_{uy}\}$. {This proof is the primary contribution of this section.}
\end{remark}

\begin{remark}[Time delays and Bounded Memory]
It is worth mentioning that time delays in observing the prefix of $\sigma$ can also be accommodated by adjusting the prefix constraint \eqref{eq:prefix_Phi} appropriately. If we observe only $\sigma_{0:t-d}$ at time $t$ for some delay $d$, we can impose the prefix constraint $\phi^{\sigma}(:\!t) = \phi^{\sigma'}(:\!t)$ when $\sigma_{0:t-d} = \sigma'_{0:t-d}$ for each $\phi^{\sigma} \in \{\mbf{\Phi}^{\sigma}_{xx}, \mbf{\Phi}^{\sigma}_{xy}, \mbf{\Phi}^{\sigma}_{ux}, \mbf{\Phi}^{\sigma}_{uy}\}$. This constraint guarantees that the controllers are identical up to time $t$. {Similarly, constraints on the available memory of measurements and prefix of switching signal can be handled using sparsity or prefix-like constraints, respectively.}
\end{remark}

We note that the prefix constraints \eqref{eq:prefix_Phi} are linear in the system response maps $\{\mbf{\Phi}^{\sigma}_{xx}, \mbf{\Phi}^{\sigma}_{xy}, \mbf{\Phi}^{\sigma}_{ux}, \mbf{\Phi}^{\sigma}_{uy}\}$. Based on \eqref{eq:Affine}, the map $\mbf{\Phi^{\sigma}}$ in \eqref{eq:SR_map_sigma} must also satisfy the following affine constraints for each $\sigma \in \mL$:
\begin{align} \label{eq:Affine_sigma}
	\begin{bmatrix} I - \Z\mbf{A^{\sigma}} ~ -\Z\mbf{B^{\sigma}} \end{bmatrix} \mbf{\Phi^{\sigma}} = \begin{bmatrix} I ~~ \bm{0} \end{bmatrix}, \mbf{\Phi^{\sigma}} \begin{bmatrix} I-\Z\mbf{A^{\sigma}} \\ -\mbf{C^{\sigma}} \end{bmatrix} = 
    \begin{bmatrix} I \\ \bm{0} \end{bmatrix},
\end{align}
thus parameterizing all possible system responses \eqref{eq:SR_map_sigma}. We now use this framework to rewrite Problems \ref{prob:H2} and \ref{prob:L1} as convex optimization problems.

\subsection{Problem 1: $\mathcal{H}_2$-optimal control, stochastic switching} \label{sec:H2}

To rewrite Problem~\ref{prob:H2} as a convex program, we adapt a procedure proposed in~\cite[Sec.~2.2.2]{SLS19} for the Linear Quadratic Regulator problem. Define the quantities $\Q \coloneqq \blkdiag\{Q_0, \ldots, Q_T\}$ and $\R \coloneqq \blkdiag\{R_0, \ldots, R_T\}$. Then,
\begin{align}
\mathbb{E} &\left[\sum_{t = 0}^{T} x_{t}^{\T}Q_tx_t + u_{t}^{\T}R_tu_t\right] = \mathbb{E} \left\|\begin{bmatrix} \Q^{\frac{1}{2}} & \bm{0} \\ \bm{0} & \R^{\frac{1}{2}} \end{bmatrix} \begin{bmatrix} \mbf{x} \\ \mbf{u} \end{bmatrix}\right\|_{2}^{2} \nonumber \\
= \mathbb{E} &\left\|\begin{bmatrix} \Q^{\frac{1}{2}} & \bm{0} \\ \bm{0} & \R^{\frac{1}{2}} \end{bmatrix} \mbf{\Phi^{\sigma}} \begin{bmatrix} \mbf{w^{\sigma}} \\ \mbf{v^{\sigma}} \end{bmatrix} \right\|_{2}^{2} \nonumber \\
= &\left\|\begin{bmatrix} \Q^{\frac{1}{2}} & \bm{0} \\ \bm{0} & \R^{\frac{1}{2}} \end{bmatrix} \mbf{\Phi^{\sigma}} \begin{bmatrix} \mbf{P_{w}^{\sigma}} & \bm{0} \\ \bm{0} & \mbf{P_{v}^{\sigma}} \end{bmatrix} \right\|_{F}^{2},
\end{align}
using \eqref{eq:SR_map_sigma} and the distributions of $\mbf{w^{\sigma}}$ and $\mbf{v^{\sigma}}$. Then, using the distribution $\pi(\sigma)$ on $\sigma \in \mL$, \eqref{eq:H2} reduces to:
\begin{align} \label{eq:H2_phi}
\begin{split}
&\min_{\{\mbf{\Phi^{\sigma}}\}_{\sigma \in \mL}} ~\sum_{\sigma \in \mL} \pi(\sigma) \left\|\begin{bmatrix} \Q^{\frac{1}{2}} & \bm{0} \\ \bm{0} & \R^{\frac{1}{2}} \end{bmatrix} \mbf{\Phi^{\sigma}} \begin{bmatrix} \mbf{P_{w}^{\sigma}} & \bm{0} \\ \bm{0} & \mbf{P_{v}^{\sigma}} \end{bmatrix} \right\|_{F}^{2} \\
&\text{subject to constraints \eqref{eq:prefix_Phi}, \eqref{eq:Affine_sigma} for each $\sigma \in \mL$}.
\end{split}
\end{align}
Equation \eqref{eq:H2_phi} is a convex quadratic program in the decision variables $\mbf{\Phi^{\sigma}}$ for $\sigma \in \mL$ with linear constraints, and can thus be solved efficiently.

\subsection{Problem 2: $\mathcal{L}_1$-optimal control, worst-case switching} \label{sec:L1}
To rewrite Problem~\ref{prob:L1} as a convex program, we adapt a procedure proposed in~\cite[Sec.~2.2.4]{SLS19} for a state-feedback controller. Let $\mbf{\Phi^{\sigma}_{x}} = \begin{bmatrix} \mbf{\Phi}^{\sigma}_{xx} & \mbf{\Phi}^{\sigma}_{xy} \end{bmatrix}$. Then, 
\begin{align}
&\max_{\substack{\left\|\mbf{w}^{\sigma}\right\|_{\infty} \leq \overline{w} \\ \left\|\mbf{v}^{\sigma}\right\|_{\infty} \leq \overline{v}}} \|\mbf{x}\|_{\infty} = \max_{\substack{\left\|\mbf{w}^{\sigma}\right\|_{\infty} \leq \overline{w} \\ \left\|\mbf{v}^{\sigma}\right\|_{\infty} \leq \overline{v}}} \left\|\mbf{\Phi^{\sigma}_{x}} \begin{bmatrix} \mbf{w}^{\sigma} \\ \mbf{v}^{\sigma} \end{bmatrix} \right\|_{\infty} \nonumber \\
= &\max_{\substack{\left\|\mbf{\tilde{w}}^{\sigma}\right\|_{\infty} \leq 1 \\ \left\|\mbf{\tilde{v}}^{\sigma}\right\|_{\infty} \leq 1}} \left\|\mbf{\Phi^{\sigma}_{x}} \begin{bmatrix} \overline{w}I_{n(T+1)} & \bm{0} \\ \bm{0} & \overline{v}I_{m(T+1)} \end{bmatrix} \begin{bmatrix} \mbf{\tilde{w}}^{\sigma} \\ \mbf{\tilde{v}}^{\sigma} \end{bmatrix} \right\|_{\infty} \nonumber \\
= & \left\|\mbf{\Phi_{x}^{\sigma}} \begin{bmatrix} \overline{w}I_{n(T+1)} & \bm{0} \\ \bm{0} & \overline{v}I_{m(T+1)} \end{bmatrix}\right\|_{\infty},
\end{align}
based on the definition of the induced norm. Then, \eqref{eq:L1} reduces to the following problem:
\begin{align} \label{eq:L1_phi}
\begin{split}
&\min_{\{\mbf{\Phi^{\sigma}}\}_{\sigma \in \mL}} \max_{\sigma \in \mL} \left\|\mbf{\Phi_{x}^{\sigma}} \begin{bmatrix} \overline{w}I_{n(T+1)} & \bm{0} \\ \bm{0} & \overline{v}I_{m(T+1)} \end{bmatrix}\right\|_{\infty} \\
&\text{subject to the constraints \eqref{eq:prefix_Phi}, \eqref{eq:Affine_sigma} for each $\sigma \in \mL$}.
\end{split}
\end{align}
This optimization can be written as a linear program in the decision variables $\mbf{\Phi}^{\sigma}$ since the set $\mL$ is finite.

We note that the optimization problems \eqref{eq:H2_phi} and \eqref{eq:L1_phi} result in a controller $\mbf{\Phi}^{\sigma}$ for each $\sigma \in \mL$. In practice, only the prefix $\sigma_{0:t}$ of the switching signal is known at each time instant. The controller at each time instant is thus determined by any of the controllers satisfying the prefix constraint \eqref{eq:prefix_K}. 

\begin{remark}
We remind the reader that the framework in this section can handle problems with any convex constraints of the form $\mbf{\Phi}^{\sigma} \in \mathcal{C}^{\sigma}$ for $\sigma \in \mL$. In addition, any objective function of the form $\mathbb{E}_{\sigma\in\mathcal{L}}[J^\sigma(\mathbf{\Phi}^\sigma)]$ or $\max_{\sigma\in\mathcal{L}}J^\sigma(\mathbf{\Phi}^\sigma)$ with convex $J^\sigma$ lead to convex programs since expectation and maximization operations preserve convexity.
\end{remark}


In the following section, we present two examples that illustrate how the optimal control problems \eqref{eq:H2} and \eqref{eq:L1} can be solved using the convex programs \eqref{eq:H2_phi} and \eqref{eq:L1_phi}.

\section{Examples} \label{sec:Examples}
In this section, we present our results on solving the optimal control Problems \ref{prob:H2} and \ref{prob:L1}. Our two examples illustrate the convex frameworks of Section \ref{sec:Reformulation} in applications to fault tolerance. Consider a fault occurring at any time instant $t_{\mathrm{fault}} \in \{0, \ldots, T\}$, which causes a system with nominal behavior to switch to malfunctioning behavior. We assume the system remains malfunctioning for the remaining time horizon. Thus, we have $M = 2$ systems in \eqref{eq:System_i}, for nominal and malfunctioning dynamics. {Accordingly, the language $\mL$ is chosen as in \eqref{eq:eg_L}, corresponding to the prefixes in Fig.~\ref{fig:FSM}(b).}
                

In these examples, we use the dynamics of {a subsystem of} the ADMIRE fighter jet model \cite{SDRA}, a common benchmark application for control frameworks \cite{Ola05, PBDO24}. 
The three states in this subsystem are the roll, pitch and yaw rates in $\mathrm{rad/s}$, denoted $p, q$ and $r$ respectively. The four inputs in this subsystem correspond to deflections (in radians) of canard wings, the left and right elevons and the rudder. A continuous-time, linearized model for this subsystem was established in \cite{Ola05}. We discretize these dynamics using a unit time step, obtaining the following time-invariant model:
\begin{gather}
\begin{split} \label{eq:ADMIRE}
x = \begin{bmatrix} p \\ q \\ r \end{bmatrix}; ~~ A = \begin{bmatrix} \phantom{-}0.3550 & 0 & 0.3428 \\ 0 & 0.6031 & 0 \\ -0.0521 & 0 & 0.7901 \end{bmatrix}; \\
B = \begin{bmatrix} 0 & -2.7200 & \phantom{-}2.7200 & \phantom{-}0.7376 \\ 1.298 & -0.9996 & -0.9996 & \phantom{-}0.0019 \\ 0 & -0.1153 & \phantom{-}0.1153 & -0.8362 \end{bmatrix}.
\end{split}
\end{gather}
We assume that the output $y_t$ corresponds to noisy measurements of the entire state vector, so that $C = I_3$. We note that due to linearization, the state and control inputs in this model correspond to deviations from their equilibrium values.





\subsection{Example 1: $\mathcal{H}_2$-optimal control, stochastic switching} \label{eg:H2}
In this example, we address the optimal control problem in \eqref{eq:H2} for the system \eqref{eq:ADMIRE}. {The nature of the fault we consider is an additional drift affecting the above dynamics. In the context of the model \eqref{eq:System_i}, this failure is expressed using the matrices $A^1 = A$, $A^2 = A - 1.5I_3$, $B^1 = B^2 = B$, $C^1 = C^2 =  I_3$, where $A$ and $B$ are given in \eqref{eq:ADMIRE}. Such a drift seeks to push the system towards instability.} In the formulation of Problem \ref{prob:H2}, we assume the process and measurement noise are distributed according to $\mbf{w}^{\sigma} \sim \mathcal{N}(\bm{0}, I)$ and $\mbf{v}^{\sigma} \sim \mathcal{N}(\bm{0}, I)$ for each $\sigma \in \mL$. We assume that a switch can occur with equal probability at any $t_{\mathrm{fault}} \in \{0, \ldots, T\}$, where the time horizon is $T = 10$. 
The cost matrices are $Q_t = I$ and $R_t = 2I$ for each $t$.

\begin{figure}[!t]
    \centering
    \includegraphics[width=0.4\textwidth]{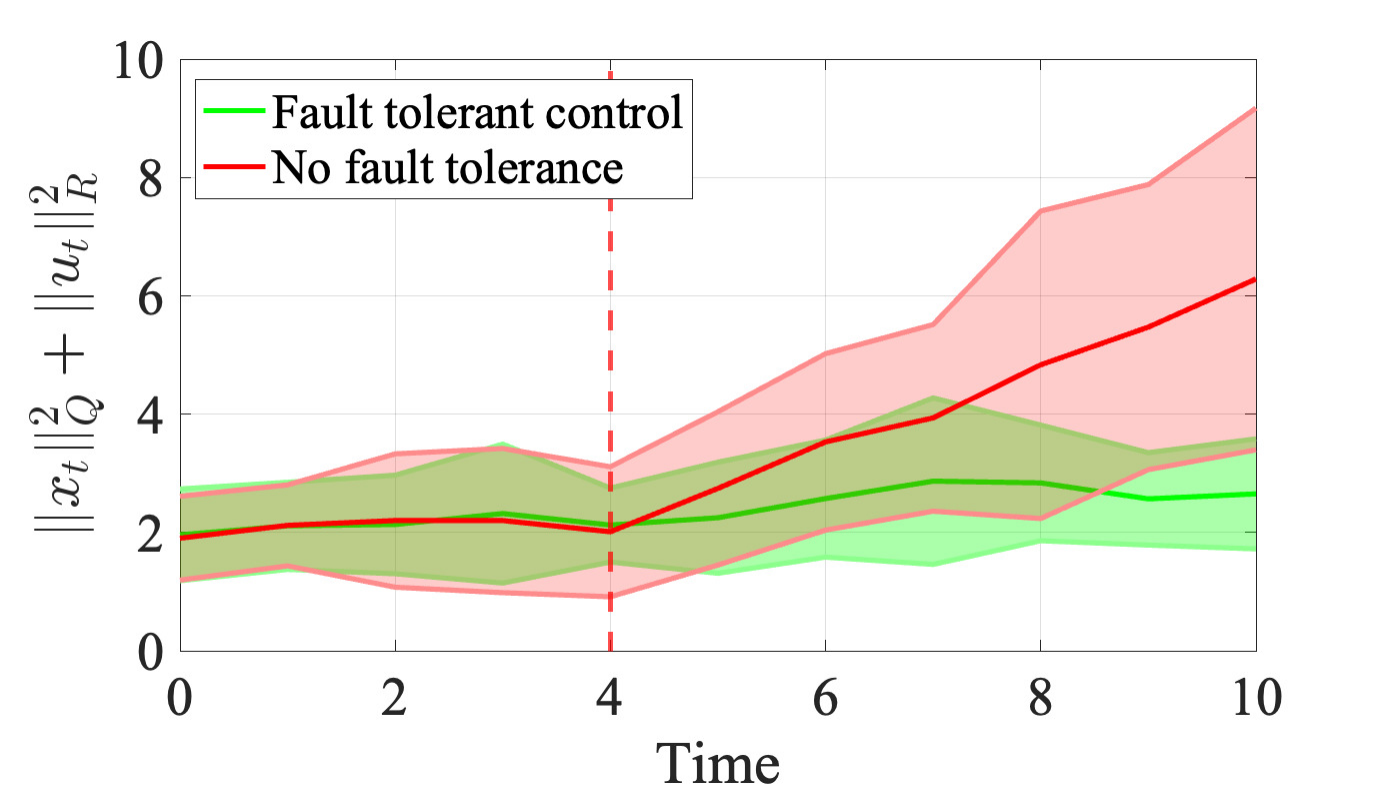}
    \caption{{Comparison of fault-tolerant and non-fault-tolerant controllers for Problem \ref{prob:H2}. The dashed red line denotes the time $t_{\mathrm{fault}}$ when the fault occurs. The shaded regions in both curves indicate trajectories within one standard deviation of the mean value of the trajectory.}}
    \label{fig:H2}
    \vspace{-0.6cm}
\end{figure}

{Fig.~\ref{fig:H2} illustrates trajectories of the cost $\|x_t\|_{Q}^{2} + \|u_t\|_{R}^{2}$ over time, where we show the mean and standard deviation of these trajectories over multiple runs. The vertical dashed red line at $t = 4$ denotes the switching time $t_{\mathrm{fault}}$, which is first randomly chosen and then fixed for all trajectories. We compare the performance of the fault-tolerant controller designed using the convex program in \eqref{eq:H2_phi}, with a baseline controller of the form \eqref{eq:u_SLS} which considers only nominal dynamics, and is hence not fault-tolerant. It is clear that the use of a mode-prefix-based controller improves performance over a standard optimal control approach. Once an additional drift affects the dynamics, the fault-tolerant approach is able to mitigate the effects of this fault while the baseline approach cannot do so. The fault-tolerant controller achieves both lower mean and variance of the cost over multiple runs.}




\subsection{Example 2: $\mathcal{L}_1$-optimal control, worst-case switching} \label{eg:L1}
In our second example, {we consider a sensor failure affecting the dynamics \eqref{eq:ADMIRE}, such that the first sensor produces measurements and the other sensors produce only measurement noise. This failure is expressed using the matrices $A^1 = A^2 = A$, $B^1 = B^2 = B$, $C^1 = I_3$ and $C^2 \in \mathbb{R}^{3\times 3}$, where $A$ and $B$ are given in \eqref{eq:ADMIRE} and $C^2$ is a matrix with $1$ in its $(1,1)$ element and zeros elsewhere.} We consider {non-}deterministic switches that are chosen to maximize the objective in \eqref{eq:L1} according to the formulation of Problem \ref{prob:L1}. Further, we assume bounded process and measurement noise, i.e., $\|\mbf{w}^{\sigma}\|_{\infty} \leq 1$ and $\|\mbf{v}^{\sigma}\|_{\infty} \leq 1$. We consider a time horizon $T = 10$. 

\begin{figure}[!t]
    \centering
    \includegraphics[width=0.4\textwidth]{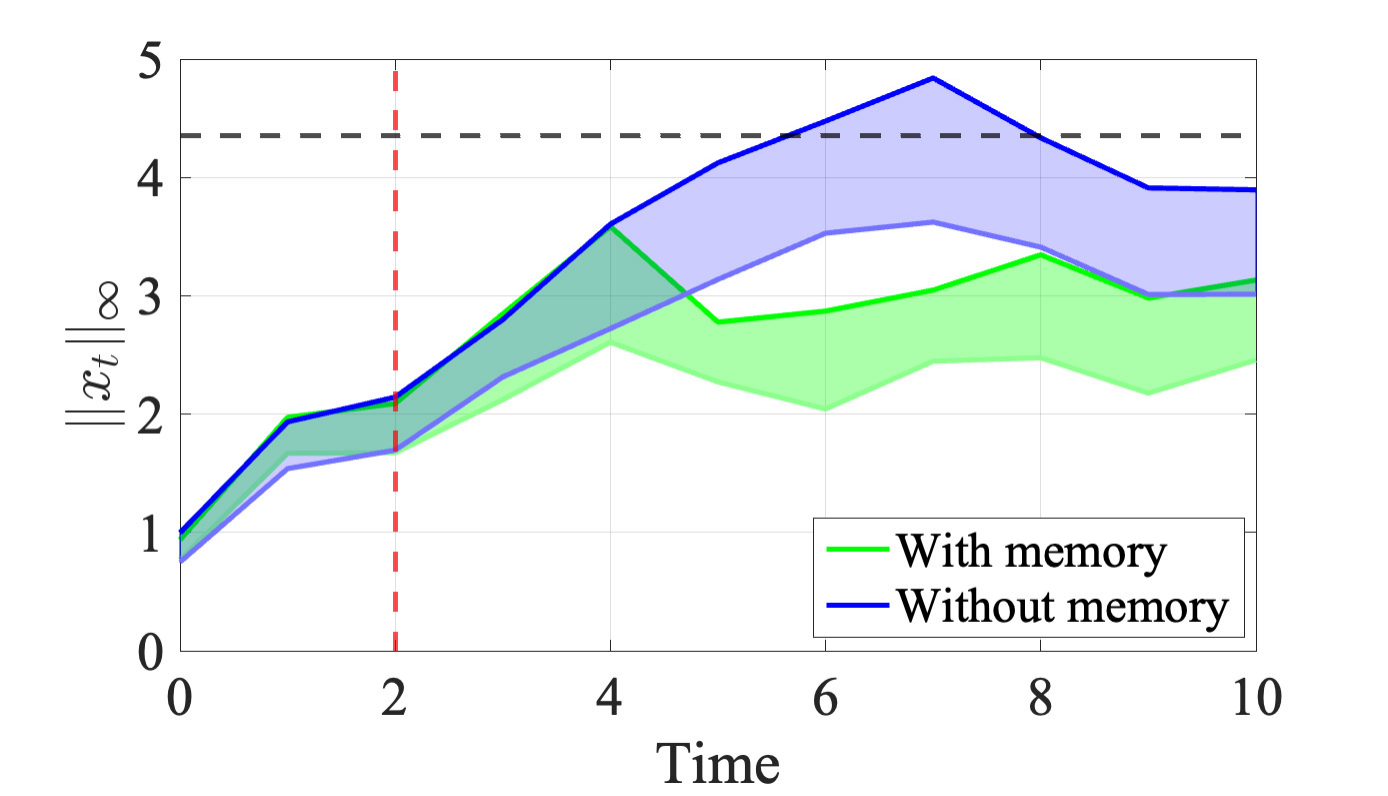}
    \caption{{Comparison of controllers with and without memory for Problem \ref{prob:L1}. The dashed red line denotes the time instant $t_{\mathrm{fault}} = 2$ when sensor failure occurs, corresponding to the worst-case $\sigma$ in \eqref{eq:L1_phi}. The dashed black lines in the state trajectory denote the maximal value of $\|\mbf{x}\|_{\infty}$ in \eqref{eq:L1}. The shaded regions indicate trajectories with values up to one standard deviation below the maximal trajectory.}}
    \label{fig:L1}
    \vspace{-0.6cm}
\end{figure}

{The results for this setting are as shown in Fig.~\ref{fig:L1} in terms of trajectories of $\|x_t\|_{\infty}$ over time. We compare the performance of the controller designed using the convex program in \eqref{eq:L1_phi}, with a static, memoryless controller that does not use past values of the output. In other words, $\mathbf{K}^{\sigma}$ is restricted to be block diagonal in this comparison. We show the maximum amplitude of this objective over multiple runs in the thick lines, and also indicate its values one standard deviation below this maximum. This variance among trajectories results from different values of the process and measurement noise, chosen from the interiors and vertices of the hypercubes $\|\mbf{w}^{\sigma}\|_{\infty} \leq 1$ and $\|\mbf{v}^{\sigma}\|_{\infty} \leq 1$. The vertical dashed red line denotes the worst-case switching time $t_{\mathrm{fault}} = 2$, corresponding to the worst-case switching signal $\sigma$ in \eqref{eq:L1_phi}.}

{From Fig.~\ref{fig:L1}, it is clear that the controller with memory outperforms a static, memoryless controller. The horizontal dashed black lines denote the worst-case amplitude of the state, based on the maximal value of $\|\mbf{x}\|_{\infty}$ in \eqref{eq:L1}, under the controller with memory. Then, we can see that all trajectories under the controller with memory are bounded within this worst-case value, while there is at least one trajectory under the memoryless controller that violates this constraint. This example thus demonstrates a safety certificate for the system state using the controller designed in the convex program \eqref{eq:L1_phi}, using the maximal value of $\|\mbf{x}\|_{\infty}$ in \eqref{eq:L1} computed \emph{a priori} on solving \eqref{eq:L1_phi}.}


We remark that while we consider two specific classes of failures in these examples, the framework developed in Sections \ref{sec:Formulation} and \ref{sec:Reformulation} can generally address a variety of switching mechanisms and system failures to ensure fault tolerance. However, the size of the language $\mL$ can become prohibitively large when more switching mechanisms are considered, increasing the computational cost of solving these problems.

\balance

\section{Conclusions} \label{sec:Conclusions}
In this paper, we presented an approach for designing optimal controllers for discrete-time switched linear systems over a finite horizon using system level synthesis. Motivated by fault tolerance for systems with abrupt changes in dynamics, we proposed a mode-prefix-based linear output-feedback controller that depends on the prefix, or history of the switching signal. To solve this problem, we synthesize a controller for each switching sequence under a prefix constraint to ensure that same prefixes lead to the same control input. We proved that such a prefix constraint is linear even in the parameterization through system level synthesis. We also demonstrated how the optimal control problems we consider simplify to convex programs. Using examples of a fighter jet model suffering from an additional drift or a sensor failure, we illustrate how this framework can be applied to ensure fault tolerance of control systems. A key avenue for future work is attempting to reduce the computational effort in solving these programs{, potentially through a forgetting mechanism limiting the available memory of measurements or prefix of the switching signal.}

\bibliographystyle{ieeetran}
\bibliography{references}

\end{document}